\begin{document}
\title{The need for accuracy and smoothness in numerical simulations}
%
%
\author{Carl Christian Kjelgaard Mikkelsen \inst{1}\orcidID{0000-0002-9158-1941 } \and
  Lori{\'e}n L{\'o}pez-Villellas \inst{2}\orcidID{0000-0002-1891-4359}
}
\authorrunning{C. C. Kjelgaard Mikkelsen et al. }
%
\institute{
  Department of Computing Science, Ume{\aa} University, 90187 Ume{\aa}, Sweden \email{spock@cs.umu.se} \\
  \and Departamento de Inform\'atica e Ingenier\'ia de Sistemas / Arag\'on Institute for Engineering Research (I3A), Universidad de Zaragoza, Zaragoza, Spain \\
  \email{lorien.lopez@unizar.es} \\
}

\maketitle 
\begin{abstract} We consider the problem of estimating the error when solving a system of differential algebraic equations. Richardson extrapolation is a classical technique that can be used to judge when computational errors are irrelevant and estimate the discretization error. We have simulated molecular dynamics with constraints using the GROMACS library and found that the output is not always amenable to Richardson extrapolation. We derive and illustrate Richardson extrapolation using a variety of numerical experiments. We identify two necessary conditions that are not always satisfied by the GROMACS library.\footnote{This is a preprint of a paper accepted by the conference PPAM-2024. The paper is to appear in Springer's LNCS series.}
\keywords{error estimation, Richardson extrapolation, numerical integration, external ballistics, multi-body dynamics, GROMACS}
\end{abstract}
\section{Motivation} Consider the problem of simulating the motion of a system of atoms moving in a force field subject to a set of constraints. In this case, Newton's 2nd law takes the form of the following system of differential algebraic equations
\begin{align}
  \bm{q}'(t) &= \bm{v}(t), \\
  \bm{M}\bm{v}'(t) &= \bm{f}(\bm{q}(t)) - \bm{G}(\bm{q}(t))^T\bm{\lambda}(t), \\
  \bm{g}(\bm{q}(t)) &= \bm{0}. 
\end{align}
The vector $\bm{q}$ represents the position of the atoms.
The vector $\bm{v}$ represents the velocities of the atoms. The function $\bm{f}$ represents the force acting on the atoms.
The nonsingular diagonal matrix $\bm{M}$ lists the masses of the atoms.
The function $\bm{G}$ is the Jacobian of the constraint function $\bm{g}$ and $\bm{\lambda}$ is a vector of Lagrange multipliers.
In the field of molecular dynamics, the standard algorithm for this problem is the SHAKE algorithm \cite{shake1977}.
It uses a pair of staggered grids with uniform step size $h$ and takes the form
\begin{align}
  \bm{v}_{n+1/2} &= \bm{v}_{n-1/2} + \bm{h} \bm{M}^{-1} \left( \bm{f}(\bm{q}_n) - \bm{G}(\bm{q}_n)^T \bm{\lambda}_n \right), \\
  \bm{q}_{n+1} &= \bm{q}_n + h \bm{v}_{n + 1/2}, \\
  \bm{g}(\bm{q}_{n+1}) &= \bm{0}. \label{equ:constraint}
\end{align}
The constraint equation \eqref{equ:constraint} is usually a nonlinear equation with respect to the Lagrange multipliers $\bm{\lambda}_n$. 
Now let $T \in \mathbb{R}$ denote any target value that can be computed in terms of the trajectory $t \rightarrow (q(t), v(t))$ and let $A_h$ denote the corresponding value obtained from the output of the SHAKE algorithm. It is clear that $T$ and $A_h$ are both functions of the force field $\bm{f}$ and the question of adjusting $\bm{f}$ to match the outcome of a physical experiment naturally suggests itself. Let therefore $T_0 \in \mathbb{R}$ be given and consider the problem of solving the equation
\begin{equation}
  T_0 = T(\bm{f})
\end{equation}
with respect to $\bm{f}$. The fundamental problem is that we cannot compute the exact values of $T(\bm{f})$ and $A_h(\bm{f})$. We must contend with the fact that we cannot expect to solve the constraint equations exactly nor can we avoid rounding errors in general. Let $\hat{A}_h(\bm{f}, \tau, u)$ denote the value returned by our computer when solving the constraint equations with a relative error bounded by $\tau$ and using floating point arithmetic with unit roundoff $u$. Suppose that $T_0 - \hat{A}_h(\bm{f}, \tau, u)$ is small. Can we conclude that $T_0 - T(\bm{f})$ is small? The triangle inequality delivers the following bound:
\begin{equation}
  |T_0 - T(\bm{f}) | \leq  |T_0 - \hat{A}_h(\bm{f},\tau,u)| + |A_h(\bm{f}) - \hat{A}_h(\bm{f},\tau,u)| + |T(\bm{f}) - A_h(\bm{f})|.
\end{equation}
We conclude that $T_0 - T(\bm{f})$ is small, if the \emph{computational error} $A_h(\bm{f}) - \hat{A}_h(\bm{f}, \tau, u)$ and the \emph{discretization error} $T(\bm{f}) - A_h(\bm{f})$ are both small. If we cannot control these errors, then we cannot say with certainty that our model delivers a good approximation of the physical reality.

Richardson extrapolation is a classical technique that is widely used in computational science and engineering applications \cite{roache1998}. It can be used to estimate the size of discretization errors or improve the accuracy of an existing solution \cite{zahari2018} and it has applications in event location \cite{mannshardt1978one} as well. As we shall demonstrate, Richardson extrapolation can often be used to determine when computational errors are insignificant.

In this paper we derive and illustrate the use of Richardson extrapolation using a variety of numerical experiments.
GROMACS is a state-of-the-art library for molecular dynamics that is widely used in academia \cite{gromacs2005}.
We demonstrate that the output of GROMACS is not always amenable to Richardson extrapolation. 
We identify two conditions that are not always satisfied by GROMACS and we demonstrate that each condition is necessary for the successful application of Richardson extrapolation.
Our data and software are freely available from our GitHub \cite{spockcc2024ppam} repository along with every script and function needed to replicate every number, table and figure from scratch. The names of our MATLAB functions are written with a typewriter font, e.g., \texttt{plot\_shells}.

\section{Theory}

Consider the problem of approximating a target value $T$ using a method $A = A_h$ that depends on a single real parameter $h$. We shall assume that there exists nonzero real constants $\alpha$ and $\beta$ and real exponents
\begin{equation}
  0 < p < q < r
\end{equation}
such that the error $E_h = T - A_h$ satisfies
\begin{equation} \label{equ:aex}
  E_h  = \alpha h^p + \beta h^q + O(h^r), \quad h \rightarrow 0_+.
\end{equation}
We say that the error $E_h$ satisfies an asymptotic error expansion. Frequently, the exponents $(p,q,r)$ are all integers, but since we shall encounter exponents that are not integers, we insist that $h$ is strictly positive.

Our first task is to estimate the error $E_h$ for a specific value of $h$. Richardson's error estimate $R_h$ is defined by the equation
\begin{equation}
 R_h =  \frac{A_h - A_{2h}}{2^p - 1}.
\end{equation}
The following theorem shows that Richardson's error estimate is a good approximation of the error when $h$ is sufficiently small.

\begin{theorem} If $E_h$ satisfies equation \eqref{equ:aex}, then
  \begin{equation}
    \frac{E_h - R_h}{h^q} \rightarrow  \left(1 - \frac{2^q-1}{2^p-1} \right) \beta, \quad h \rightarrow 0_+.
  \end{equation}
\end{theorem}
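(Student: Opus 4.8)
The plan is to express everything in terms of the asymptotic error expansion and compute the limit directly. I'll start by writing out $R_h$ explicitly. Since $A_h = T - E_h$, the difference $A_h - A_{2h}$ equals $E_{2h} - E_h$, so substituting the expansion \eqref{equ:aex} gives
\begin{equation}
  A_h - A_{2h} = \alpha(2^p - 1)h^p + \beta(2^q - 1)h^q + O(h^r).
\end{equation}
Dividing by $2^p - 1$ then yields an asymptotic expansion for $R_h$ itself, namely $R_h = \alpha h^p + \beta \frac{2^q-1}{2^p-1} h^q + O(h^r)$.

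Next I would form the difference $E_h - R_h$. The leading $\alpha h^p$ terms cancel by construction, which is precisely the point of Richardson's estimate, leaving
\begin{equation}
  E_h - R_h = \beta\left(1 - \frac{2^q-1}{2^p-1}\right)h^q + O(h^r).
\end{equation}
Dividing through by $h^q$ and letting $h \rightarrow 0_+$, the $O(h^r)$ term contributes $O(h^{r-q})$, which vanishes since $r > q$, and the stated limit follows immediately.

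The main thing to be careful about is the bookkeeping of the $O(h^r)$ remainder under the algebraic operations: I must confirm that $E_{2h}$ contributes $O((2h)^r) = O(h^r)$ with the same order (the constant $2^r$ is absorbed), that dividing by the fixed nonzero constant $2^p - 1$ preserves the $O(h^r)$ bound, and that after dividing by $h^q$ the remainder is genuinely $o(1)$. None of these steps is deep, so I do not anticipate a real obstacle; the only care required is to keep the remainder terms honest and to note explicitly where the hypothesis $r > q$ is used to kill the remainder in the limit. The cancellation of the $h^p$ term is the conceptual heart of the argument, and it is automatic from the definition of $R_h$.
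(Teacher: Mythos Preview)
Your proposal is correct and follows essentially the same approach as the paper: expand $A_h - A_{2h}$ using \eqref{equ:aex}, divide by $2^p-1$ to get an expansion for $R_h$, subtract from $E_h$ so that the $\alpha h^p$ terms cancel, and invoke $r>q$ to kill the remainder after dividing by $h^q$. The only cosmetic difference is that you pass through the identity $A_h - A_{2h} = E_{2h} - E_h$ directly, whereas the paper writes out $T - A_h$ and $T - A_{2h}$ separately before subtracting; the arguments are otherwise identical.
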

\begin{proof}
  By assumption, there is a function $h \rightarrow g(h)$ 
  \begin{equation}
    T - A_h  = \alpha h^p + \beta h^q + g(h)
  \end{equation}
  as well as constants $C>0$ and $h_0 > 0$ such that
  \begin{equation}
    \forall h \leq h_0 \: : \: |g(h)| \leq Ch^r.
  \end{equation}
  It follows that
  \begin{equation}
    T- A_{2h} = 2^p \alpha h^p + 2^q \beta h^q + g(2h).
  \end{equation}
  We conclude that
  \begin{equation} \label{equ:Dh:1}
    A_h - A_{2h} = (2^p - 1) \alpha h^p + (2^q - 1) \beta h^q + g(2h) - g(h).
  \end{equation}
  It follows that
  \begin{equation}
    R_h = \frac{A_h - A_{2h}}{2^p - 1} = \alpha h^p + \frac{2^q-1}{2^p-1} \beta h^q + \frac{g(2h)-g(h)}{2^p-1}.
  \end{equation}
  This implies that
  \begin{equation}
    \alpha h^p =  R_h - \frac{2^q-1}{2^p-1} \beta h^q + O(h^r).
  \end{equation}
  We conclude that
  \begin{equation}
    E_h = R_h + \left(1 - \frac{2^q-1}{2^p-1} \right) \beta h^q + O(h^r).
  \end{equation}
  The theorem follows immediately from this expression because $q < r$.
\end{proof}
We shall now demonstrate how the values of $p$ and $q$ can be determined by observing the exact values of $A_h$ for different values of $h$.
We define Richardson's fraction $F_h$ using the expression
\begin{equation}
  F_h = \frac{A_{2h} - A_{4h}}{A_h - A_{2h}}.
\end{equation}
The behavior of the function $h \rightarrow F_h$ is described by the following theorem.
\begin{theorem} If $E_h$ satisfies equation \eqref{equ:aex} and if $(m,n)$ is given by
  \begin{equation}
    m = q - p, \quad n = r - p,
  \end{equation}
  then Richardson's fraction $F_h$ satisfies
  \begin{equation}
    F_h \rightarrow 2^p, \quad h \rightarrow 0_+
  \end{equation}
  and 
  \begin{equation}
    \frac{F_h - 2^p}{h^m} \rightarrow (2^m-1) \nu, \quad \nu = \frac{2^q-1}{2^p-1} \frac{\beta}{\alpha}.
    \end{equation}
\end{theorem}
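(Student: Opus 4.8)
The plan is to compute the numerator $A_{2h} - A_{4h}$ and the denominator $A_h - A_{2h}$ of Richardson's fraction separately, using the expression for $A_h - A_{2h}$ already derived in equation \eqref{equ:Dh:1}. First I would reuse \eqref{equ:Dh:1}, which tells me that
\begin{equation}
  A_h - A_{2h} = (2^p - 1) \alpha h^p + (2^q - 1) \beta h^q + O(h^r),
\end{equation}
absorbing $g(2h) - g(h)$ into the $O(h^r)$ term. Replacing $h$ by $2h$ in this identity immediately gives the numerator
\begin{equation}
  A_{2h} - A_{4h} = (2^p - 1) \alpha (2h)^p + (2^q - 1) \beta (2h)^q + O(h^r) = 2^p (2^p - 1) \alpha h^p + 2^q (2^q - 1) \beta h^q + O(h^r).
\end{equation}

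Next I would form the quotient $F_h$ and factor the dominant term $(2^p - 1)\alpha h^p$ out of both numerator and denominator. This reduces $F_h$ to $2^p$ times a ratio of the form $(1 + c\, h^m + O(h^n)) / (1 + d\, h^m + O(h^n))$, where $m = q - p$, $n = r - p$, and the coefficients $c$ and $d$ are built from $\nu = \frac{2^q-1}{2^p-1}\frac{\beta}{\alpha}$. Taking $h \rightarrow 0_+$ kills the $h^m$ terms since $m > 0$, yielding $F_h \rightarrow 2^p$, which is the first claim.

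For the second and sharper claim I would expand the ratio to first order in $h^m$ using the geometric-series expansion $(1 + d\, h^m + O(h^n))^{-1} = 1 - d\, h^m + O(h^{2m}) + O(h^n)$. Collecting the linear-in-$h^m$ contributions, the coefficient of $h^m$ in $F_h - 2^p$ should come out to $2^p(c - d)$, and a short computation identifies $c - d$ with $(2^m - 1)\nu / 2^p$, so that the $2^p$ factors cancel and leave exactly $(2^m - 1)\nu$. The main obstacle is purely bookkeeping: I must track the exponents carefully to confirm that every discarded term is genuinely $o(h^m)$ as $h \rightarrow 0_+$. Here the two relevant error scales are $h^{2m} = h^{2(q-p)}$ arising from the second-order term of the geometric expansion and $h^n = h^{r-p}$ arising from the original remainder; I need both exponents to strictly exceed $m$, i.e. $2m > m$ (automatic since $m > 0$) and $n > m$ (equivalent to $r > q$, which holds by hypothesis). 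Once these inequalities are verified, dividing by $h^m$ and passing to the limit gives the stated result, and so the key algebraic identity to pin down is simply $c - d = (2^m - 1)\nu / 2^p$.
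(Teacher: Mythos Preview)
Your overall strategy matches the paper's almost exactly: factor $(2^p-1)\alpha h^p$ out of both $A_h-A_{2h}$ and $A_{2h}-A_{4h}$, write $F_h$ as $2^p$ times a ratio $(1+c\,h^m+\cdots)/(1+d\,h^m+\cdots)$, and read off the leading behaviour. The only stylistic difference is that the paper uses the exact identity $\tfrac{1+f}{1+g}=1+\tfrac{f-g}{1+g}$ where you use a geometric-series expansion; both yield the same first-order information, and your check that $2m>m$ and $n>m$ is precisely what is needed to discard the remainder.

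There is, however, a concrete arithmetic slip in your last paragraph. After factoring, the numerator becomes $2^p+2^q\nu h^m+O(h^n)=2^p\bigl(1+2^m\nu h^m+O(h^n)\bigr)$, so $c=2^m\nu$ and $d=\nu$, giving
\[
  c-d=(2^m-1)\nu,
\]
not $(2^m-1)\nu/2^p$. Carried through honestly, the coefficient of $h^m$ in $F_h-2^p$ is $2^p(c-d)=2^p(2^m-1)\nu$, hence
\[
  \frac{F_h-2^p}{h^m}\;\longrightarrow\;2^p(2^m-1)\nu,\qquad h\to 0_+,
\]
which differs from the stated limit $(2^m-1)\nu$ by a factor of $2^p$. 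If you inspect the paper's own proof you will see the same phenomenon: from the displayed line $F_h=2^p\bigl(1+\tfrac{(2^m-1)\nu h^m}{1+g(h)}+\cdots\bigr)$ one obtains $F_h-2^p=2^p\cdot\tfrac{(2^m-1)\nu h^m}{1+g(h)}+\cdots$, yet the factor $2^p$ silently vanishes in the next displayed equation. So your method is sound; the discrepancy you would meet upon computing $c-d$ correctly reflects a missing $2^p$ in the stated constant, not a flaw in your argument.
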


\begin{proof} It is convenient to rewrite equation \eqref{equ:Dh:1} as
  \begin{equation}
    A_h - A_{2h} = (2^p-1) \alpha h^p \Big [ 1  + \nu h^m + \phi(h) \Big]
  \end{equation}
  where $\phi(h) \in O(h^n)$. It follows immediately that
   \begin{equation}
      A_{2h} - A_{4h} = 2^p (2^p-1) \alpha h^p \Big[ 1 + 2^m \nu h^m + \phi(2h) \Big].
  \end{equation}
  This allows us to write
  \begin{equation}
    F_h = \frac{ A_{2h} - A_{4h}}{ A_h - A_{2h}} = 2^p \left[ \frac{ 1 + 2^m \nu h^m + \phi(2h)}{ 1  + \nu h^m + \phi(h)} \right].
  \end{equation}
  The fraction on the right-hand side is of the form
  \begin{equation}
    \frac{1 + f(h)}{1 + g(h)} = 1 + \frac{f(h) - g(h)}{1 + g(h)}
  \end{equation}
  where
  \begin{equation}
    f(h) = 2^m \nu h^m + \phi(2h), \quad g(h) = \nu h^m + \phi(h).
  \end{equation}
  It follows immediately that
  \begin{equation}
    F_h = 2^p \left( 1 + \frac{(2^m-1)\nu h^m}{1+g(h)} + \frac{\phi(2h) - \phi(h)}{1+g(h)}\right) \rightarrow 2^p, \quad h \rightarrow 0_+
  \end{equation}
  and
  \begin{equation}
    \frac{F_h - 2^p}{h^m}  = \frac{(2^m-1)\nu }{1+g(h)} + \frac{\phi(h) - \phi(2h)}{(1+g(h))h^m} \rightarrow (2^m - 1) \nu, \quad h \rightarrow 0_+,
  \end{equation}
  because $m < n$, so that
  \begin{equation}
     \frac{\phi(h) - \phi(2h)}{h^m} \rightarrow 0, \quad h \rightarrow 0_+.
  \end{equation}
  This completes the proof.
\end{proof}
We conclude that if $T-A_h$ satisfies the asymptotic error expansion \eqref{equ:aex}, then the order of the primary error term can be determined from the limit
\begin{equation}
2^p = \underset{h \rightarrow 0_+}{\lim} F_h
\end{equation}
and the difference $m = q-p$ can be determined from the fact that
\begin{equation} \label{equ:asymtotic-behavior-of-Fh}
  \log |F_h - 2^p| \approx \log(2^m-1) + \log|\nu| + m \log(h)
\end{equation}
is a good approximation for $h$ sufficiently small. In particular, we note that the right-hand side of equation \ref{equ:asymtotic-behavior-of-Fh} is a linear function of $\log(h)$ with slope $m$.
\section{Elementary examples}

The theory applies to the difference $T - A_h$ between the target value $T$ and the exact value of the approximation $A_h$.
In practice, the computed value $\hat{A}_h$ is different from the exact value $A_h$.
However, as we shall demonstrate shortly, is often possible to assert that the computational error is irrelevant and estimate the error $T - \hat{A}_h$ accurately. We begin by considering the familiar problem of computing definite integrals
\begin{equation} \label{equ:integral}
  T = \int_{a}^b f(x) dx
\end{equation}
using the composite trapezoidal rule $A_h$ given by
\begin{equation}
  A_h = \frac{1}{2}h \sum_{j=0}^{n-1} \left[ f(x_j) + f(x_{j+1}) \right], \quad x_j = jh, \quad nh = b-a, \quad n \in \mathbb{N}.
\end{equation}
It is well-known that if $f \in C^{\infty}([a,b], \mathbb{R})$, then there exists a sequence $\{\alpha_j\}_{j=1}^\infty \subset \mathbb{R}$ such that 
\begin{equation} \label{equ:trapezoidal-rule:aex}
  E_h = \sum_{j=1}^k \alpha _j h^{2j} + O(h^{2k+1}), \quad h \rightarrow 0_+.
\end{equation}
In particular, $(p,q,r) = (2,4,6)$ when $f$ is everywhere smooth.
\paragraph{Integration of a function that is everywhere smooth.}

Let $f : [0, 1] \rightarrow \mathbb{R}$ be given by $f(x) = e^x$ and $T$ be given by equation \eqref{equ:integral}.
The script {\tt rint\_mwe1} computes the composite trapezoidal sum $A_h$ using $h_k = 2^{-k}$ for $k \in \{0,1,\dots,19\}$ and generates \ref{fig:rint_mwe1a} and \ref{fig:rint_mwe1b}.
The raw data shows that $\hat{A}_{h_k}$ approaches $4 = 2^2$ as $k$ increases and $k \in \{2,3,\dots,14\}$.
This suggests that $p=2$.
Figure \ref{fig:rint_mwe1a} illustrates the evolution of the \emph{computed} values of Richardson's fraction.
We observe that $k \rightarrow \log_2|\hat{F}_{h_k} - 4|$ is essentially a linear function of $k$ with slope $-m = -2$ for $k \in \{2,3,\dots,10\}$.
This is the so-called asymptotic range, where the computed value $\hat{A}_h$  behaves in a manner that is indistinguishable from the exact value $A_h$.
We conclude that the experiment supports the existence of an asymptotic error expansion with $(p,q) = (p,p+m) = (2,4)$.
Since the target value $T$ is known, we can treat Richardson's error estimate as an approximation of the error $T - \hat{A}_h$ and compute the corresponding relative error, see Figure \ref{fig:rint_mwe1b}.
We observe that the computed value of Richardson's error estimate is a good approximation of the error $T - \hat{A}_{h_k}$. In fact, the corresponding relative error decreases when $k$ increases as long as we also remain inside the asymptotic region.

\begin{figure}[h]
\begin{subfigure}[h]{0.49\linewidth}
\includegraphics[width=\linewidth]{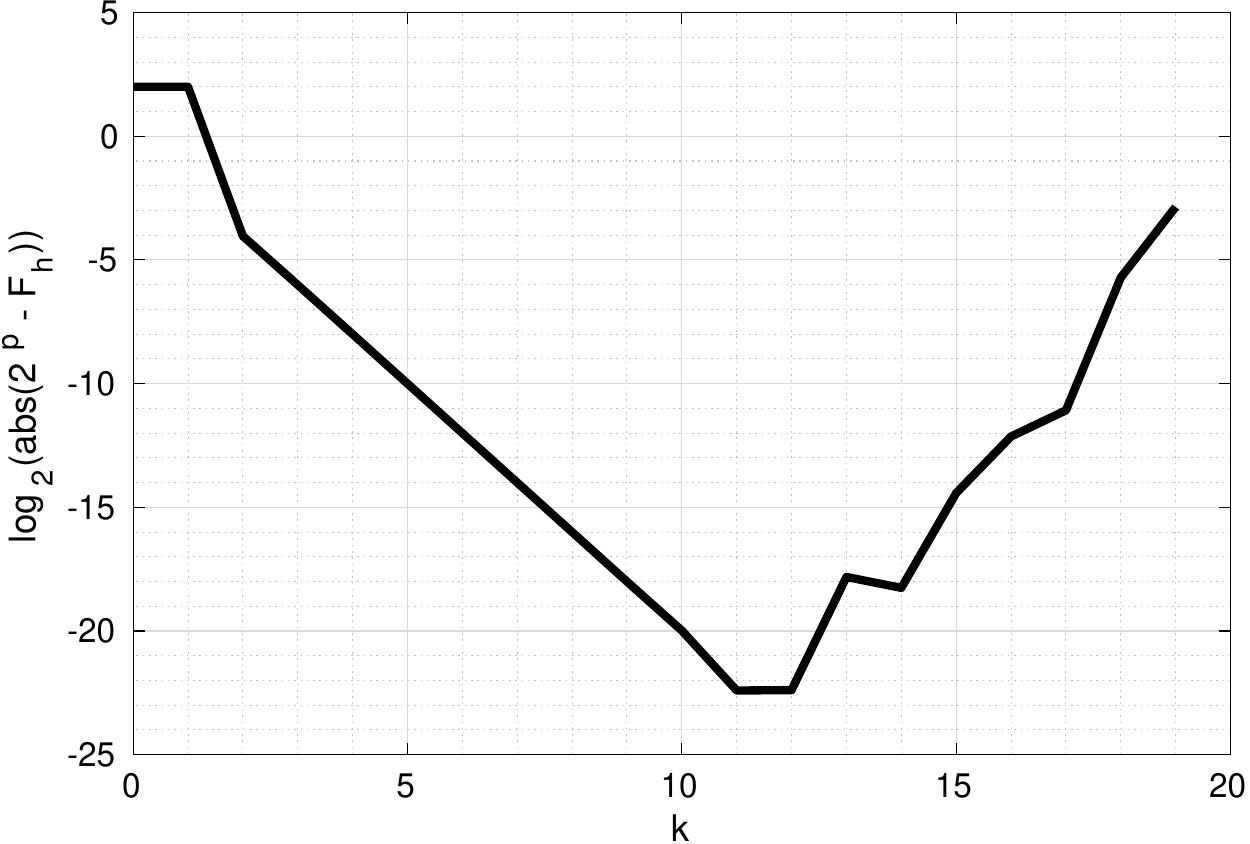} 
\caption{The evolution of $F_h$} \label{fig:rint_mwe1a}
\end{subfigure}
\hfill
\begin{subfigure}[h]{0.49\linewidth}
\includegraphics[width=\linewidth]{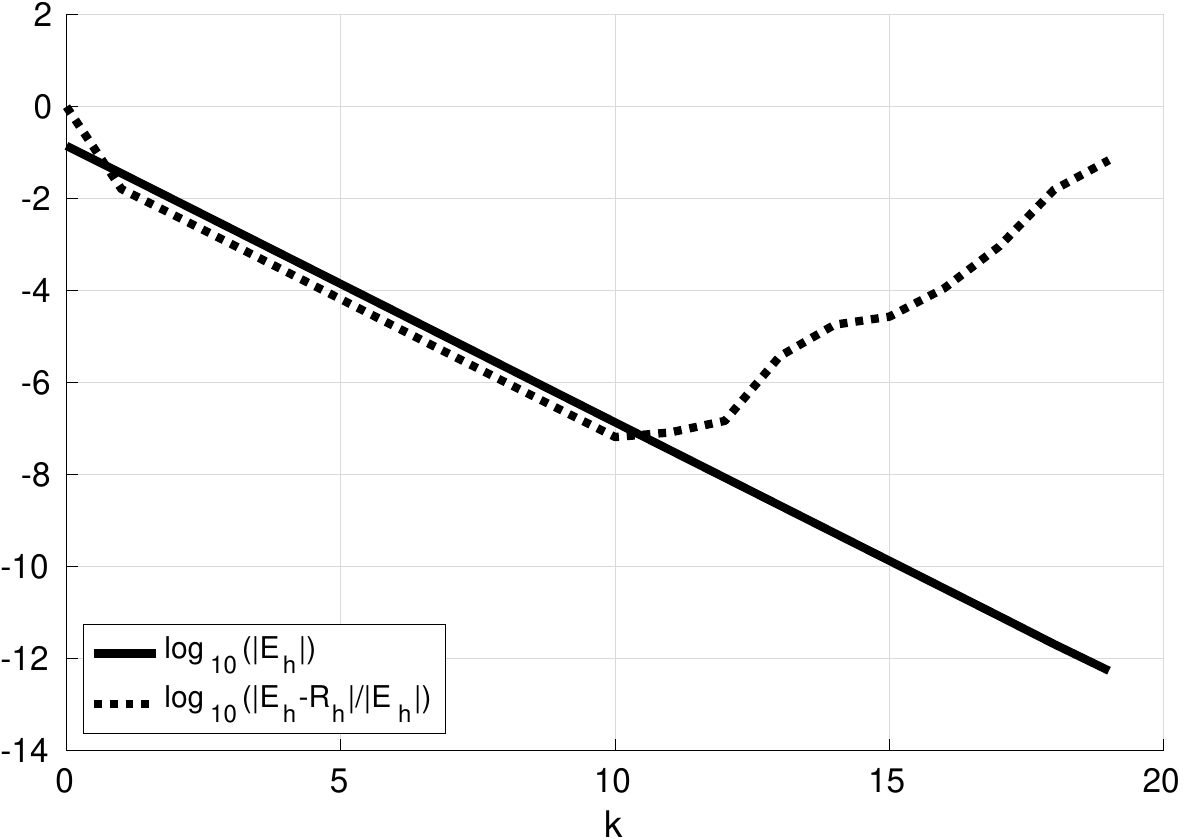} 
\caption{The size of $E_h$ and the accuracy of $R_h$} \label{fig:rint_mwe1b}
\end{subfigure}%
\caption{The behavior of $F_h$, $E_h$ and $R_h$ for a method with $(p,q) = (2,4)$.}
\end{figure}

\paragraph{Integration of a function that is smooth in all but one point.}

Let $f : [0,1] \rightarrow \mathbb{R}$ be given by $f(x) = \sqrt{x}$ and let $T$ be given by equation \eqref{equ:integral}.
Then $T = \frac{2}{3}$. Since $f$ is not differentiable at $x=0$ we have no guarantee that there exists an asymptotic error expansion of the form given by equation \eqref{equ:trapezoidal-rule:aex}.
The script {\tt rint\_mwe2} computes $A_h$ using $h_k = 2^{-k}$ for $k \in \{0,1,\dots,25\}$ and generates Figures \ref{fig:rint_mwe2a} and \ref{fig:rint_mwe2b}.
The raw data shows that $p = 2$ cannot be true, but it is plausible that $p \approx \frac{3}{2}$.
Figure \ref{fig:rint_mwe2a} illustrates the evolution of the \emph{computed} values of Richardson's fraction.
We observe that $k \rightarrow \log_2|\hat{F}_{h_k} - 2^{3/2}|$ is essentially a linear function of $k$ with slope $-\frac{1}{2}$ for $k \in \{2,3,\dots,18\}$.
This is the asymptotic range where the computed numbers $\hat{A}_h$ behave in a manner that is similar to the exact value $A_h$. 
We conclude that the experiment is consistent with an asymptotic error expansion with $(p,q)=(\tfrac{3}{2},2)$.
Since the target value $T$ is known, we can treat Richardson's error estimate as an approximation of the error $T-\hat{A}_h$ and compute the corresponding relative error, see Figure \ref{fig:rint_mwe2b}.
We observe that the computed value of Richardson's error estimate is a good approximation of the error $T - \hat{A}_h$.
In fact, the corresponding relative error decreases when $k$ increases and we remain inside the asymptotic region.

\begin{figure}
\begin{subfigure}[h]{0.49\linewidth}
\includegraphics[width=\linewidth]{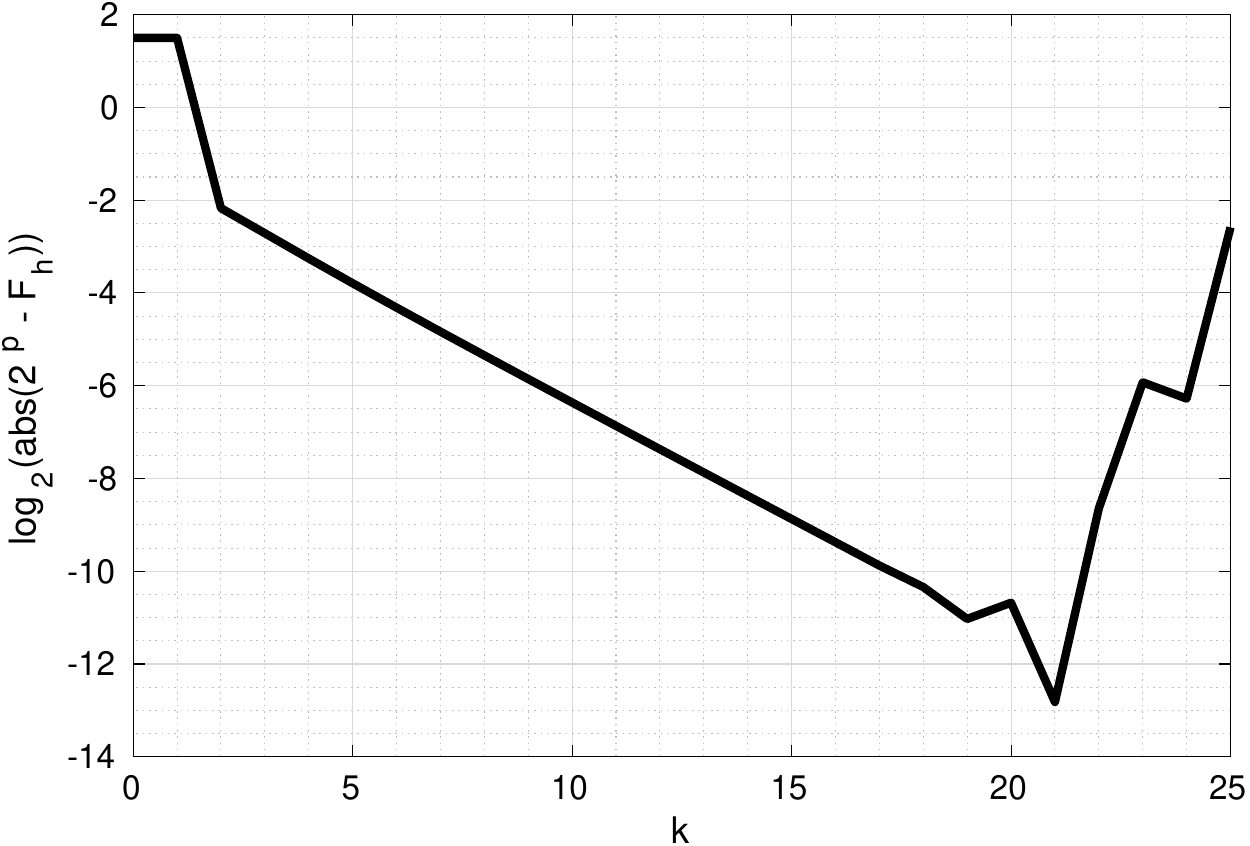} 
\caption{The evolution of $F_h$} \label{fig:rint_mwe2a}
\end{subfigure}
\hfill
\begin{subfigure}[h]{0.49\linewidth}
\includegraphics[width=\linewidth]{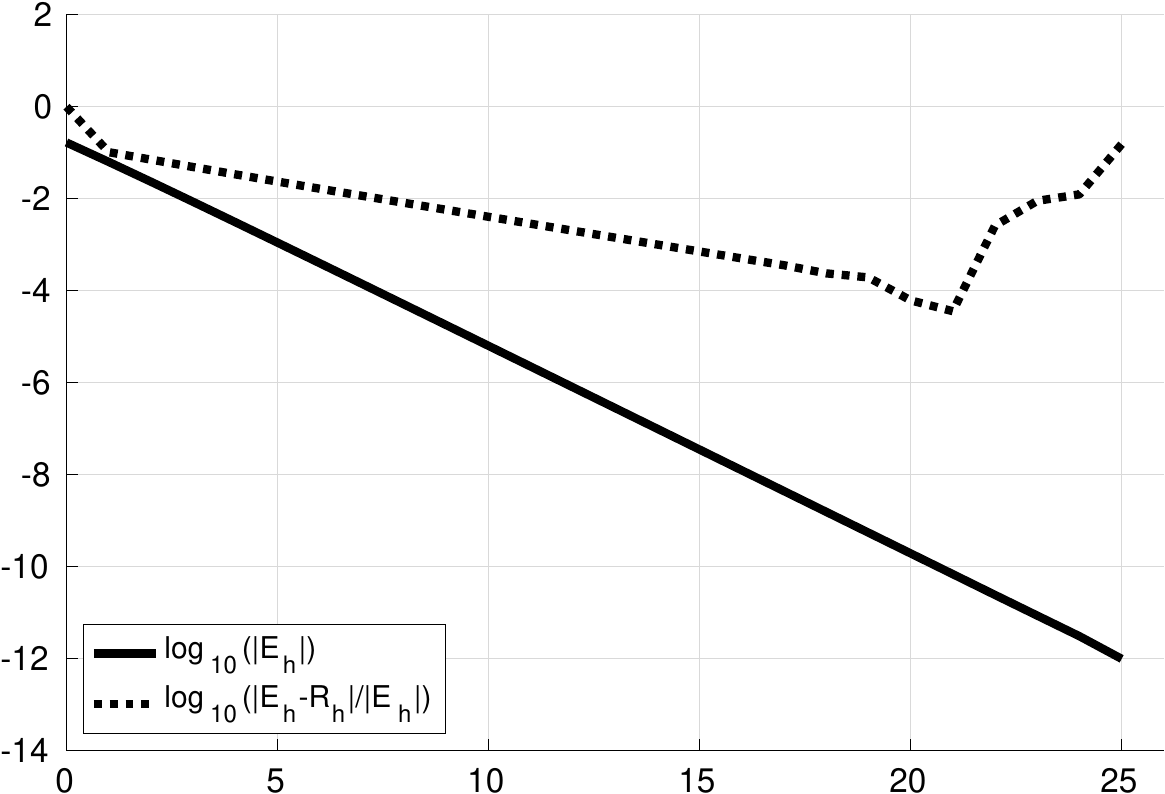} 
\caption{The size of $E_h$ and the accuracy of $R_h$.} \label{fig:rint_mwe2b}
\end{subfigure}%
\caption{The evolution $F_h$ and the accuracy of $R_h$ for a method with $(p,q) = (\frac{3}{2},2)$.}
\end{figure}

We mention in passing that low order methods are more practical than high order methods in the sense that low order methods tend to have asymptotic ranges that are larger than high order methods.
This is due to the fact the function $h \rightarrow F_h$ suffers from subtractive cancellation when $h$ is sufficiently small. This issue is more acute for high order methods than for low order methods, because $A_h$ tends to $T$ much more rapidly for high order methods than for low order methods.

\section{Practical examples}

In this section we present the results of more elaborate experiments that highlight both the utility and the practical limitations of Richardson's extrapolation.

\subsection{A successful application of the theory}


\paragraph{Example: Identify the shells fired by a howitzer.}

Consider the D-20 howitzer whose maximum range is known to be about 17.3 km \cite{foss1976artillery}. We have access to tables of the drag coefficient for 6 different shells types \cite{jbmballistics}. Can we determine the drag coefficient that provides the best match to the physical reality?

The script {\tt maxrange\_rk1} models a shell as a point particle moving in a plane subject to Earth's standard gravity and the international standard atmospheric model.
Each trajectory is integrated using Euler's explicit method ({\tt 'rk1'}) and all but the final step has the same size $h$. The final step is adjusted to place the shell on the ground. The drag functions are interpolated from tables using cubic spline interpolation. The function {\tt plot\_shells} will plot the drag coefficients for the different shells as a function of the Mach number.
For each drag coefficient our target value $T$ is the maximum range of the shell as the elevation of the howitzer varies continuously from $0$ to $\frac{\pi}{2}$. For each drag coefficient, we compute $12$ different approximations $A_{h_k}$ of $T$ using the step size $h_k = 2^{3-k}$ seconds, where $k \in \{1,2,\dots,12\}$.
For each drag coefficient and for each value of the time step $h$, a range function is defined which returns the range of the shell as a function of the howitzers elevation $\theta$.
The range functions are unimodal and the maximum range is found using the golden section search algorithm.
The initial search bracket is $[0,\pi/2$] and this bracket is systematically reduced in length until it is shorter than $\frac{\pi}{2}u$.
The script will either read the raw data from a file or generate it from scratch.
In any case, the script produces several figures and tables including Table \ref{tab:maxrange_rk1_table_tol53} and Figure \ref{fig:maxrange_rk1_fraction_tol53}.
\begin{table}
 \caption{The computed maximum range for 6 shells fired from a D-20 howitzer.}
 \label{tab:maxrange_rk1_table_tol53} 
 \centering
 \vspace{.25cm}
 \begin{tabular}{c|c|c}
   Shell type & Maximum range (m) & Error estimate (m) \\ \hline
   G1      &     12832    &            0.4 \\
   G2      &     16857    &            0.1 \\
   G5      &     15918    &            0.2 \\
   G6      &     15556    &            0.2 \\
   G7      &     17461    &            0.1 \\
   G8      &     15914    &            0.1 \\
   \hline
 \end{tabular}
\end{table}
These two figures represent calculations where the final time step is computed with an error that is bounded by $u h_{k}$ where $u = 2^{-53}$ is the double precision unit roundoff. Table \ref{tab:maxrange_rk1_table_tol53} lists the maximum range and the corresponding error estimate for each of the 6 shell types in our library using a time step of $h = 2^{-9} s$. In each case the error estimate suggests that the computed range is exact to the number of figures shown. In particular, we see that a G7 type shell achieves a maximum range of 17.5 km and all other shells have ranges that are less than 16.9 km. However, it is a fallacy to conclude anything on the basis of this table alone. In each case, we need to assert that we are inside the asymptotic range and that the error estimates are reliable. To this end, we examine the evolution of Richardson's fraction for the maximum range of each shell, see Figure \ref{fig:maxrange_rk1_fraction_tol53}. For each of the 3 different drag coefficients shown we see that the evolution of Richardson's fraction supports an asymptotic error expansion with $(p,q) = (1,2)$. This result is consistent with the use of Euler's explicit method which is 1st order accurate in the time step\footnote{The remaining figures are similar and have been omitted to save space.}. We observe that for each drag coefficient, $k=12$ is still inside the asymptotic range and we have no reason to doubt the magnitude of the error estimate. We conclude that the best model for the D-20 howitzer is in fact the G7 shell.

\begin{figure}[t!]
  \centering
  \includegraphics[width=\linewidth]{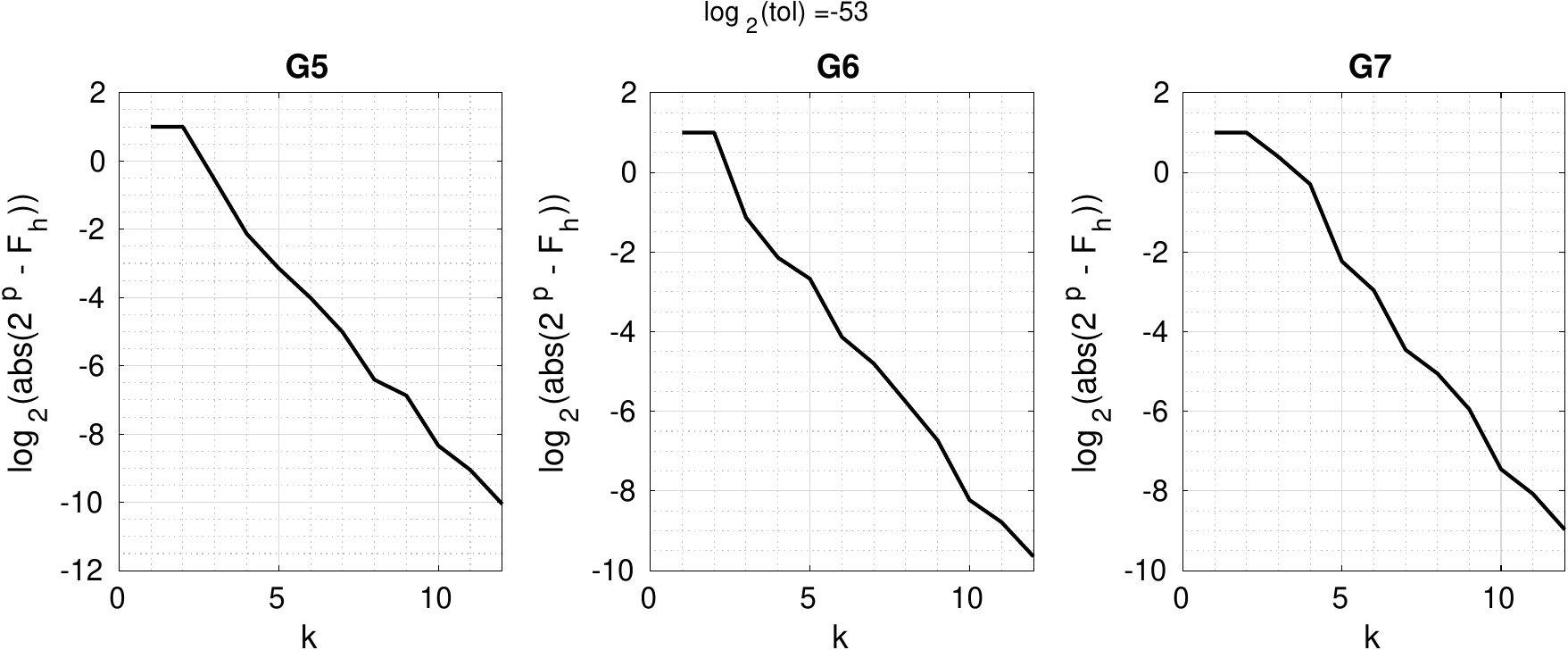} 
  \caption{The evolution of Richardson's fraction corresponding to the maximum range of 3 different shells fired from the D-20 howitzer.} \label{fig:maxrange_rk1_fraction_tol53}
\end{figure}

\subsection{An unsuccessful application of the theory}

We utilized GROMACS v2021 to conduct experiments on the behavior of hen egg white lysozyme submerged in water within a cubic simulation box, following Justin Lemkul's Lysozyme in Water GROMACS Tutorial \cite{lemkul2019from}. Several steps were taken to prepare the system for production simulation: first, ions were introduced to achieve electrical neutrality. Subsequently, energy minimization was performed using the steepest descent algorithm until the maximum force reached below 1000.0 kJ/(mol·nm). Following this, the system underwent 100 ps of equilibration in an NVT ensemble to stabilize temperature, followed by another 100 ps of equilibration in an NPT ensemble to stabilize pressure. The described process was replicated using two different force fields, OPLS-AA/L and CHARMM36. We conducted production simulations of 1 ps for both force fields, using $n \in \{250, 500, 1000, 1100:100:2000, 3000:1000:16000 \}$ steps to cover this interval.
Moreover, we used two different values of the tolerance $\tau$ for the SHAKE algorithm, namely $\tau \in \{10^{-4}, 10^{-12}\}$. 
For each experiment, we computed the total kinetic and potential energy of the system at the end of the simulation.
The function {\tt gromacs\_figures} will generate Figure~\ref{fig:oplsaaltol04} and Figure~\ref{fig:oplsaaltol12} for the OPLS-AA/L force field and similar figures for CHARMM36.

\begin{figure}
  \centering
  \includegraphics[width=\linewidth]{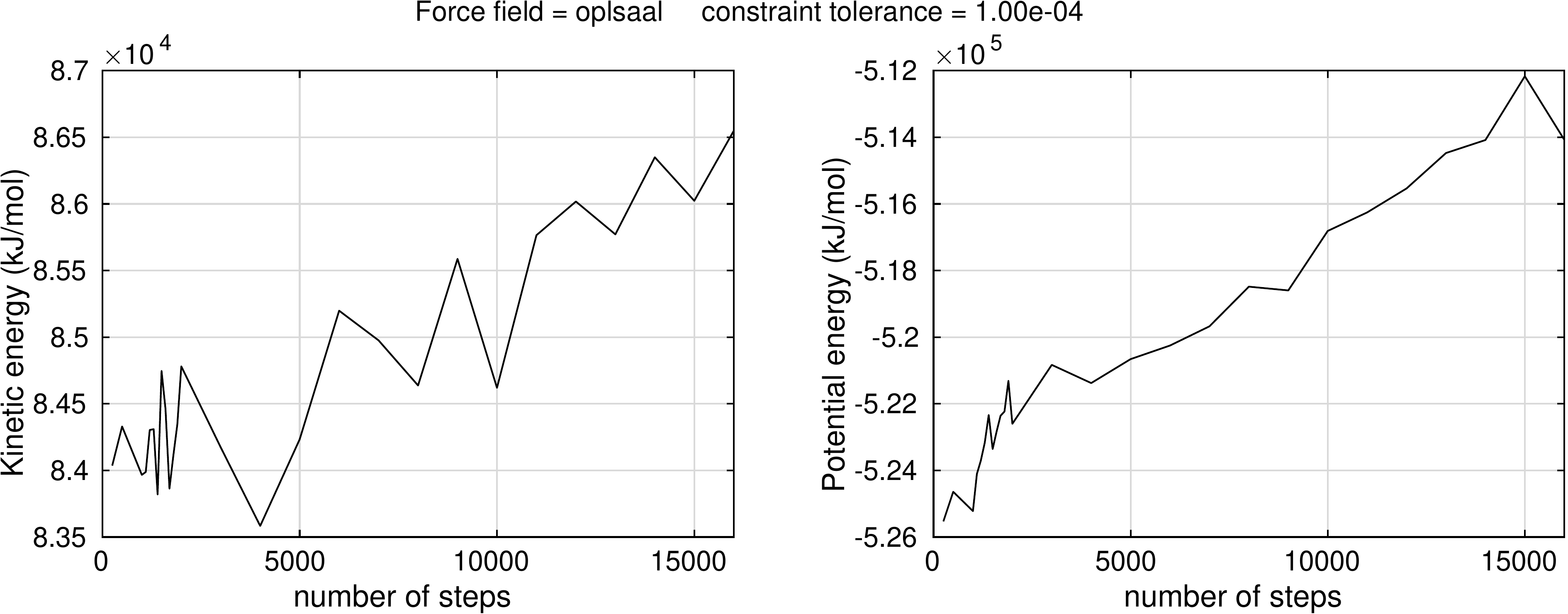}  
  \caption{The evolution of the kinetic and potential energy of a system as the number of time-steps used to cover $1$ ps of real time.} \label{fig:oplsaaltol04}
\end{figure}

These figures display the total potential and kinetic energy at the end of the simulation as a function of the total number $n$ of time steps used to cover the interval. The figures present several features of interest.
Firstly, the potential energy and especially the kinetic energy exhibits violent oscillations when the tolerance is large, i.e., $\tau = 10^{-4}$. The amplitude of the oscillations is reduced when $\tau = 10^{-12}$. We expect the solution of the underlying differential algebraic equation to behave nicely, but we have no such expectation for the computed approximation unless $\tau$ is very small. 
Secondly, the total energy grows linearly with the number of time steps. This is not surprising as we expect the rounding error to grow with the number of operations.
Thirdly, if the computed energies for ${\tt tol} = 10 ^{-12}$ follow an asymptotic error expansion, then the commonly used time step of $1$ fs ($n=1000$ in this case) is \emph{not} well inside the asymptotic range. Why is this? If we were in the asymptotic range, then $\hat{A}_h \approx T - \alpha h^p$ would be a good approximation for some $\alpha \not = 0$ and $p>0$. In particular, the value of $\hat{A}_h$ should behave in a \emph{monotone} manner and the tiny oscillations that we have recorded should not be present.

\begin{figure}
  \centering
  \includegraphics[width=\linewidth]{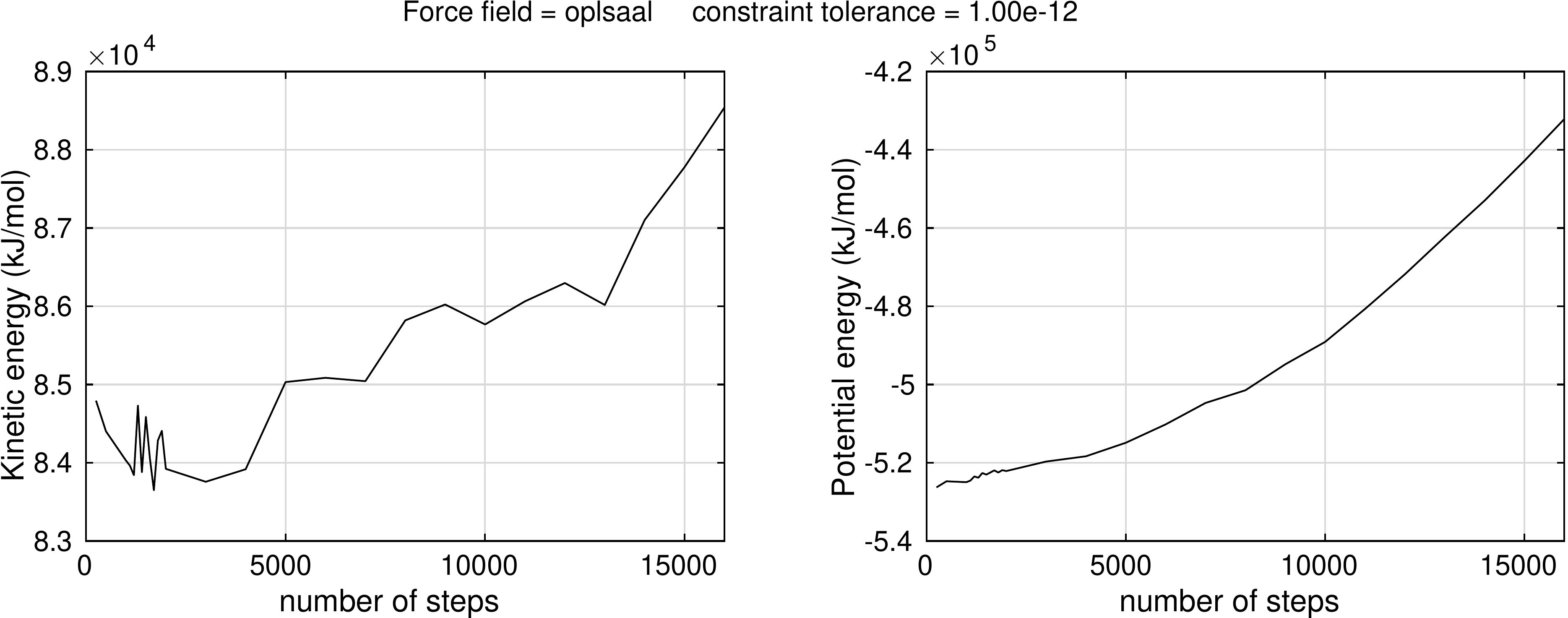} 
   \caption{The evolution of the kinetic and potential energy of a system as the number of time-steps used to cover $1$ ps of real time.} \label{fig:oplsaaltol12}
\end{figure}

\section{The difference between success and failure}

In this section we identify two conditions that are not always satisfied by the GROMACS library and we demonstrate that they are necessary for the successful application of Richardson extrapolation.

\subsection{The need for sufficient accuracy}

It is clear that the output of constrained MD simulation depends on the tolerance passed to the constraint solver. Similarly, when computing the range of a howitzer it is necessary to adjust the final time-step to place the shell on the ground with great accuracy. In the case of Euler's explicit method, the relevant equation is linear, but in general it is a nonlinear equation. The function {\tt maxrange\_rk1\_mwe1} and its companion {\tt maxrange\_rk2\_mwe2} both use the bisection method to compute the final time step with an error that is bounded by $h\cdot\text{tol}$ using a wide range of tolerance {\tt tol}.
\begin{figure}
  \includegraphics[width=\linewidth]{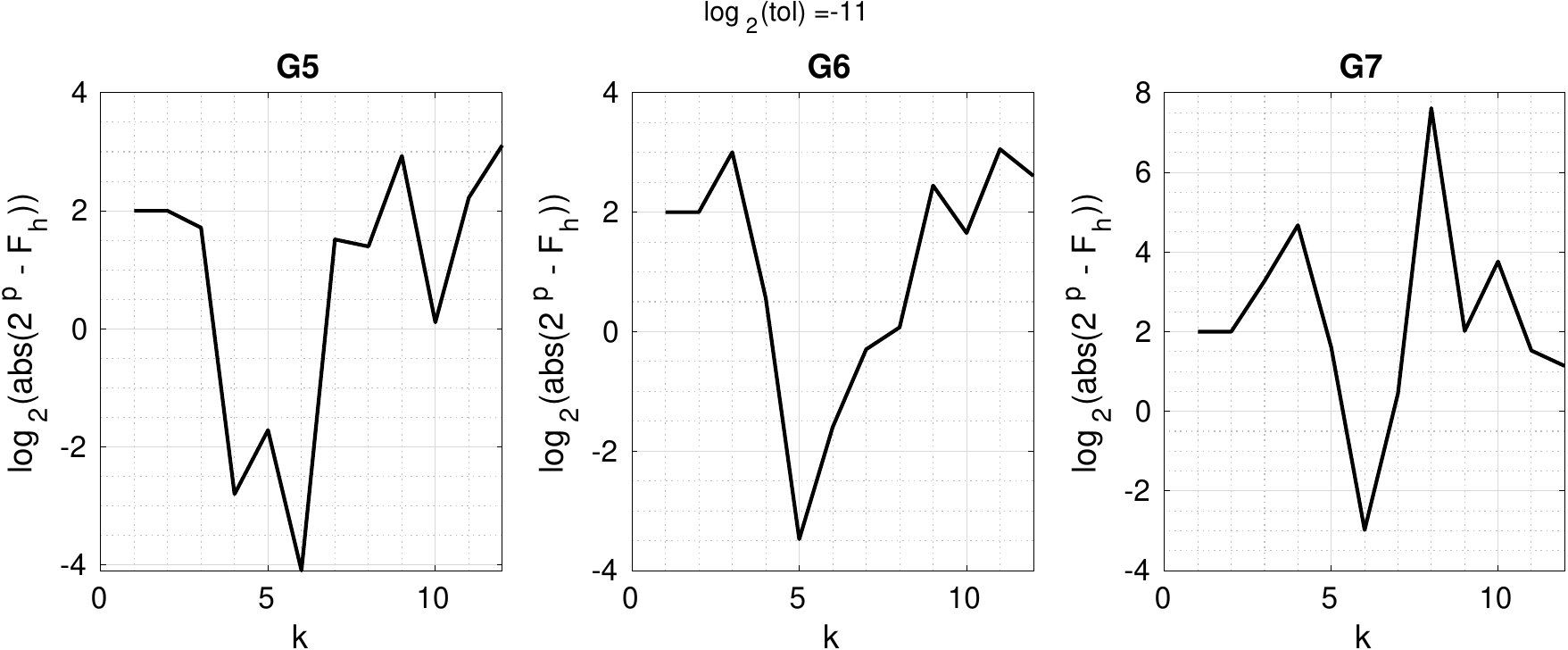} 
  \caption{The evolution of $F_h$ for the maximum range of 3 different shells using Heyn's method (\text{'rk2'}) and $\text{tol} = 2^{-11}$.}
  \label{fig:maxrange_rk2_tol11}
\end{figure}
It is instructive to observe the consequences of solving this equation inaccurately. Figures \ref{fig:maxrange_rk2_tol11} ($\text{tol} = 2^{-11}$) and Figures \ref{fig:maxrange_rk2_tol25} ($\text{tol} = 2^{-25}$) show the evolution of Richardson's fraction for two different values of the tolerance {\tt tol}.
\begin{figure}
  \includegraphics[width=\linewidth]{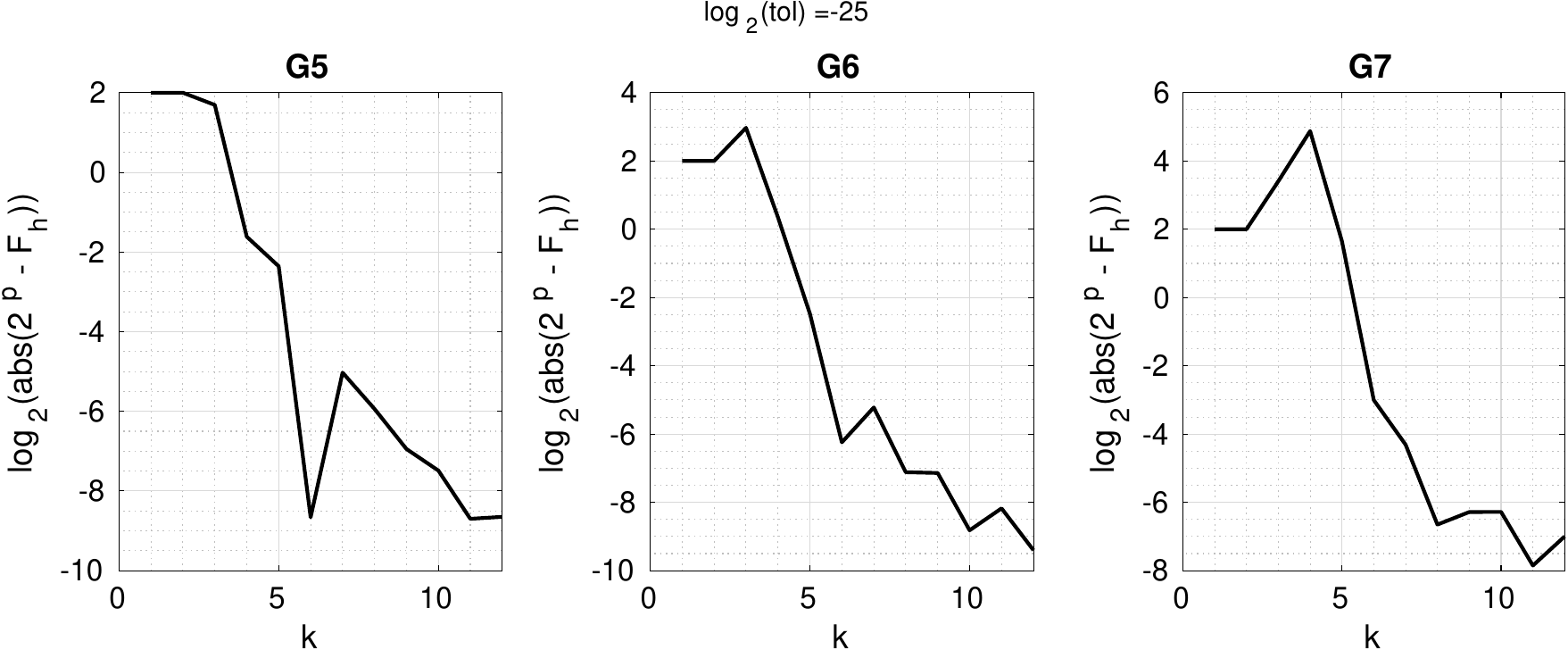} 
  \caption{The evolution of $F_h$ for the maximum range of 3 different shells computed using Heyn's method (\text{'rk2'}) and $\text{tol} = 2^{-25}$.}
  \label{fig:maxrange_rk2_tol25}
\end{figure}
When the tolerance is $\tau = 2^{-11}$, see \ref{fig:maxrange_rk2_tol11}, there is no evidence that an asymptotic error expansion exists and there is no reason to trust Richardson’s error estimate. When the tolerance is $\tau = 2^{-25}$, see Figure \ref{fig:maxrange_rk2_tol25}, the fact that $\hat{F}_h$ approaches $2^p$ suggests that an asymptotic error expansion exists, but it is not trivial to determine an asymptotic range.
Regardless, it is clear that if we do not know which tolerance is sufficient and if an error estimate is required, then our safest course of action is to solve all equations as accurately as the hardware will allow.

\subsection{The need for sufficient smoothness}

In molecular dynamics, it is common to ignore the interaction between atoms that are far away. This can be done by setting force fields to zero outside of a sufficiently large ball. There is more than one way to achieve this and the documentation for GROMACS 2021 discusses its use of force fields that are not of class $C^{\infty}$.

\begin{figure}[h!]
  \centering
  \includegraphics[width=\linewidth]{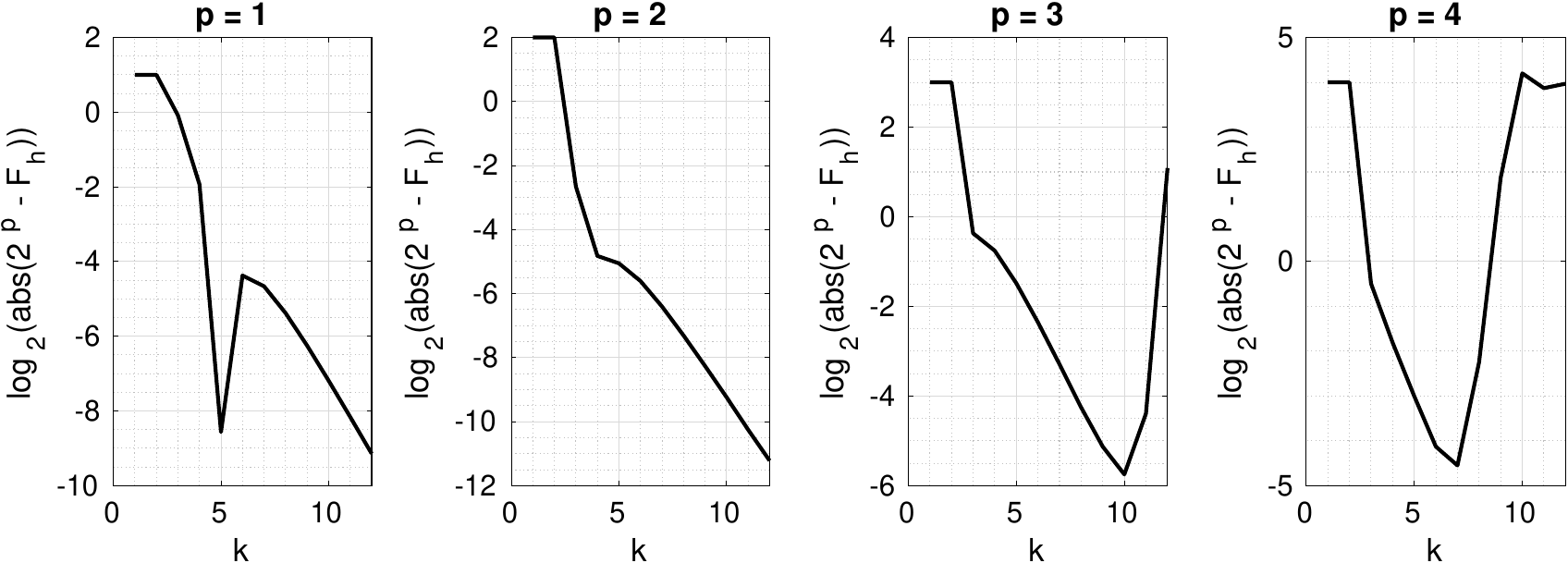} 
  \caption{The evolution of $F_h$ for the total kinetic energy of a system of ions using integrators of order $p \in \{1,2,3,4\}$.}
  \label{fig:iontrap_mwe1}
\end{figure}

In order to explore the importance of smoothness we have simulated the motion of a set of identical ions moving in a liquid. The ions repel each other electrostatically, but they are pulled towards the origin by independent and identical springs that obey Hooke's law. The friction between each ion and the liquid is proportional to its velocity. The friction drains the energy and ensures that the ions eventually come to rest in a stable configuration. 
Let $\bm{f}$ denote the force-field generated by an ion located at $0$ with charge $q$. Then
\begin{equation}
  \bm{f}(\bm{r}) = c q \bm{r}/r^3, \quad r = \|\bm{r}\|_2,
\end{equation}
where $c > 0$ is a suitable constant. The script {\tt iontrap\_mwe1} does not modify the electrostatic force fields and the $m=4$ ions ultimately form a regular tetrahedron with edge length $\rho > 0$.
The scripts {\tt iontrap\_mwe2} and {\tt iontrap\_mwe4} replace $\bm{f}$ with
\begin{equation}
\bm{f}_k(\bm{r}) = \bm{f}(\bm{r}) g_k(r), \quad k \in \{2, 4\}
\end{equation}
where $g_k$ is a switching function that assumes values in $[0,1]$. The function $g_2$ has a jump discontinuity and satisfies $g_2(r) = 1$ for $r < 0.5 \rho$ and $g_2(r) = 0$ for $r \ge 0.5 \rho$. The function $g_4$ is of class $C^\infty$ and satisfies $g_4(r) = 1$ for $r < 0.5 \rho$ and $g_4(r) = 0$ for $r \ge 0.95 \rho$. It is clear that changing the force fields impacts the motion, but can we estimate the discretization error and quantify this effect? Figure \ref{fig:iontrap_mwe1}, \ref{fig:iontrap_mwe2}, \ref{fig:iontrap_mwe4} show the evolution of Richardson's fraction for the kinetic energy at the end of each simulation.

When the force fields are not perturbed, see Figure~\ref{fig:iontrap_mwe1}, or when the perturbation is smooth, see Figure~\ref{fig:iontrap_mwe4}, then the experiments support the existence of an asymptotic error expansion and we can clearly identify an asymptotic range for each of the 4 Runge-Kutta methods used to integrate Newton's equations of motion.x1
\begin{figure}[h!]
  \centering
  \includegraphics[width=\linewidth]{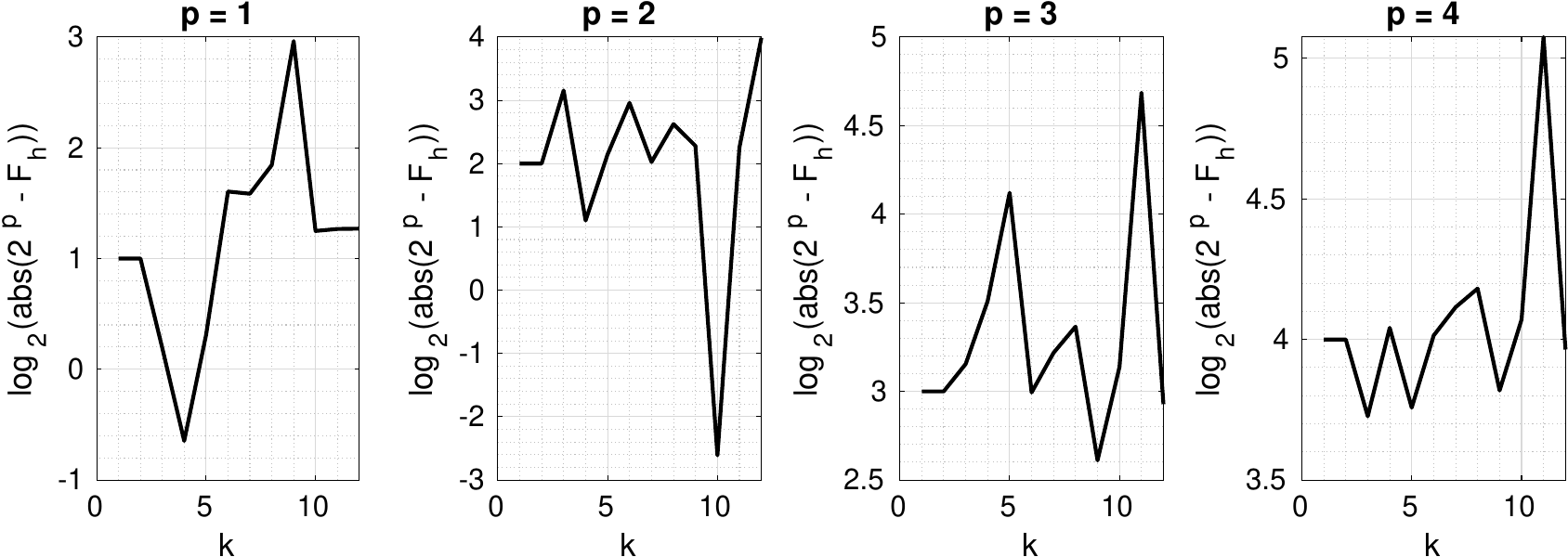} 
  \caption{The evolution of $F_h$ for the total kinetic energy of a system of ions using integrators of order $p$. The fields are zero outside a small sphere and discontinuous.}
  \label{fig:iontrap_mwe2}
\end{figure}
When the force fields are truncated and discontinuities are introduced into the simulation, see Figure~\ref{fig:iontrap_mwe2}, there is no evidence to support the existence of an asymptotic error expansion and there is no reason to suspect that Richardson's error estimate is accurate.
\begin{figure}
  \centering
  \includegraphics[width=\linewidth]{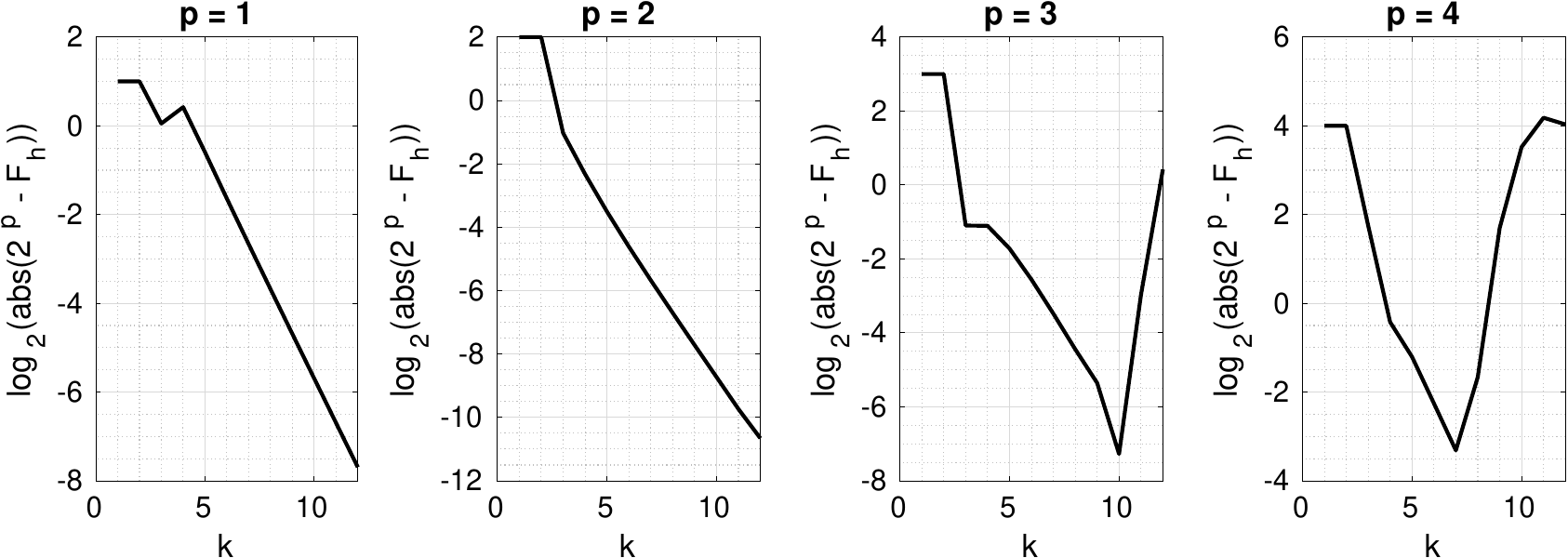} 
  \caption{The evolution of $F_h$ for the total kinetic energy of a system of ions using integrators of order $p$. The fields are zero outside a small sphere, but smooth.}
  \label{fig:iontrap_mwe4}
\end{figure}

\section{Conclusion}

A central task in the field of computational science is to fit a system of DAEs to the results of a physical experiment. It is often possible to assert when the computational errors are irrelevant, estimate the discretization error using Richardson extrapolation. This classical technique hinges on the existence of an asymptotic error expansion. However, if the functions that describe our problem are not sufficiently differentiable or if the computational error is not sufficiently small, then we cannot estimate the discretization error and we lose the ability to evaluate our model. In the absence of further analysis, the best strategy is therefore to use functions that are smooth and solve all central equations as accurately as possible.

\subsubsection{Acknowledgments}

We would like to thank the reviewers for their work which allowed us to improve the manuscript. We would like to thank Jesús Alastruey-Benedé, Pablo Ibáñez and Pablo García-Risueño for stimulating discussions on the subject matter.
The first author is supported by eSSENCE, a collaborative e-Science programme funded by the Swedish Research Council within the framework of the strategic research areas designated by the Swedish Government.
This work has been partially supported by the Spanish Ministry of Science and Innovation MCIN/AEI/10.13039/501100011033 (grant PID2022-136454NB-C22), and by Government of Aragon (T58\_23R research group).

%
%
%

\end{document}